\documentclass{amsart}
\usepackage{amsmath,amsthm,amsfonts,amssymb,amscd,latexsym}

\newtheorem{thm}{Theorem}
\newtheorem{lem}{Lemma}

\newtheorem{rem}{Remark}

\numberwithin{equation}{section}

\newcommand{\Ker}{\mathrm{Ker}}

\newcommand{\End}{\mathrm{End}}

\newcommand{\lie}{\mathrm{Lie}}
\newcommand{\ad}{\mathrm{ad}}
\newcommand{\bi}{\mathrm{bi}}
\newcommand{\hemi}{\mathrm{hemi}}
\newcommand{\K}{\mathrm{Gr}}

\newcommand{\fbb}{\mathbb{F}}
\newcommand{\cbb}{\mathbb{C}}

\newcommand{\lf}{\mathfrak{L}}
\newcommand{\gf}{\mathfrak{g}}
\newcommand{\nf}{\mathfrak{n}}
\newcommand{\slf}{\mathfrak{sl}}

\newcommand{\ovo}{\overline{\otimes}}

\mathchardef\myhyphen="2D 

\begin{document}


\title[Leibniz bimodules and Grothendieck rings]{Tensor products of
Leibniz bimodules and Grothendieck rings -- an addendum}

\author{J\"org Feldvoss}
\address{Department of Mathematics and Statistics, University of South Alabama,
Mobile, AL 36688-0002, USA}
\email{jfeldvoss@southalabama.edu}

\author{Chao Ma}
\address{Independent Researcher, London, UK}
\email{raymond.ma@me.com}

\author{Friedrich Wagemann}
\address{Laboratoire de math\'ematiques Jean Leray, UMR 6629 du CNRS,
Nantes Universit\'e, 2, rue de la Houssini\`ere, F-44322 Nantes Cedex 3, France}
\email{wagemann@math.univ-nantes.fr}

\subjclass[2020]{Primary 17A32; Secondary 17A05, 17D05, 17C99}
 
\keywords{Leibniz algebra, Leibniz bimodule, Lie algebra, Lie module,
Grothendieck ring, unital commutative product, power-associative ring,
alternative ring, Jordan ring}


\begin{abstract}
We continue our investigation of certain associativity properties (namely,
alternativity, power-associativity, or being a Jordan ring) of the
non-associative multiplication defined on the Grothendieck group of
the category of finite-dimensional Leibniz bimodules which we started
in \cite{FW}. Namely, we prove that for every finite-dimensional non-zero
Leibniz algebra over an arbitrary field this Grothendieck ring is not
power-associative. As a consequence, the Grothendieck ring is also
neither alternative nor a Jordan ring. This corrects and generalizes
some of our results in the last section of \cite{FW}.
\end{abstract}


\date{October 28, 2026}
          
\maketitle


In this note we correct and generalize some of our results in Section 5 of
\cite{FW} about certain associativity properties (namely, alternativity,
power-associativity, or being a Jordan ring) of a Grothendieck ring
associated to the category of finite-dimensional Leibniz bimodules.

For all the undefined terminology we refer the reader for non-associative
algebras to the book of Schafer \cite{S} or the survey paper of Bremner,
Murakami, and Shestakov \cite{BMS}, for Lie algebras to the book of
Humphreys \cite{H}, and for Leibniz algebras to our previous papers
\cite{F1} or \cite{FW}.

The key for all the results in this paper is the following stronger version
of Lemma~5.3\,(c) in \cite{FW}:

\begin{lem}\label{criterion}
Let $A$ and $B$ be unital commutative rings. If there exist elements
$a\in\widetilde{A}$ and $b\in\widetilde{B}$ such that $aa=m1_A+
\sum\limits_{\tilde{a}\in\widetilde{A}}m_{\tilde{a}}\tilde{a}$ and at least
one of the integers $m_{\tilde{a}}$ is non-zero as well as $bb=n1_B
+\sum\limits_{\tilde{b}\in\widetilde{B}}n_{\tilde{b}}\tilde{b}$ for some
non-zero integer $n$, then $A\circledast B$ is not power-associative.
\end{lem}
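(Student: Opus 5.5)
Since $A\circledast B$ is commutative (it inherits commutativity from $A$ and $B$ via its definition in Section~5 of \cite{FW}), every element $x$ satisfies $x^2x=xx^2$. Hence third powers are well defined, and the first possible failure of power-associativity is in degree four. The plan is therefore to take one well-chosen element $x$ and show
\[
x^2x^2\neq (x^2x)x ,
\]
which is exactly the failure of the identity $x^{2}x^{2}=x^{3}x$ that power-associativity requires.

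I would take $x$ to be the element of $A\circledast B$ built from the two given basis elements, namely the sum of (the images of) $a\in\widetilde{A}$ and $b\in\widetilde{B}$. If that turns out to be degenerate, I would first try $x=a$ alone, viewed in $A\circledast B$, because the mixed multiplication rule of $\circledast$ is precisely what destroys associativity there. The steps are as follows.

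\begin{enumerate}
\item Recall from \cite{FW} the multiplication on $A\circledast B$ in terms of the distinguished $\mathbb{Z}$-bases $\widetilde{A}\cup\{1_A\}$ and $\widetilde{B}\cup\{1_B\}$. This means the rule for the product of an $A$-component with an $A$-component, the rule for an $A$-component with a $B$-component, and the rule for $B$ with $B$. The relevant feature is that the mixed products and the unit components are governed by an augmentation-type map: the coefficient of the unit.
\item Compute $x^2$ using $aa=m1_A+\sum m_{\tilde a}\tilde a$ and $bb=n1_B+\sum n_{\tilde b}\tilde b$.
\item Compute $x^2x^2$ and $(x^2x)x$, expanding only as far as needed.
\item Isolate a single coordinate in which the two expressions differ. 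I expect this to be the coefficient of some $\tilde a$ with $m_{\tilde a}\neq 0$, where the difference should come out as a non-zero integer multiple of $n\,m_{\tilde a}$ (possibly together with further nonnegative contributions).
\end{enumerate}

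The main obstacle is step 4. The full expansions contain uncontrolled products $\tilde a\tilde a'$ and $\tilde b\tilde b'$ whose structure constants are unknown, so a direct comparison is hopeless. The key is to choose the comparison coordinate so that these unknown terms cancel or appear identically on both sides. Only the terms coming from the unit parts $m1_A$ and $n1_B$ should contribute asymmetrically, since the unit of one factor acts differently under $\circledast$ than on its own ring.

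To handle this, I would first work modulo the subgroup spanned by products of two non-identity basis elements in the $A$-component. Alternatively, I would compose with the augmentation/counting homomorphisms that the definition of $\circledast$ provides, reducing both sides to integers. The reduced difference should be a product of $n$ and $m_{\tilde a}$ (up to sign or a factor of $2$), which is non-zero by hypothesis. The strengthening over Lemma~5.3\,(c) in \cite{FW} comes exactly from needing only one non-zero $m_{\tilde a}$ and $n\neq0$, so the argument must avoid any positivity assumptions on the remaining coefficients.
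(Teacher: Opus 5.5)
Your overall shape is right: compare $x^2*x^2$ with $(x^2*x)*x$ for an element built from $a$ and $b$, and read off a coefficient at some $\tilde a$ with $m_{\tilde a}\neq 0$ that is proportional to $nm_{\tilde a}$. But the decisive step is left open, and the devices you propose for it would not work.

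In $A\circledast B$ the rings $A$ and $B$ are unital subrings, and $\tilde a*\tilde b=0=\tilde b*\tilde a$. Nothing is governed by an augmentation. This has three consequences:
\begin{itemize}
\item Your fallback $x=a$ is useless. Its powers are computed inside $A$, and if $A$ is associative (as for Grothendieck rings of Lie algebras), $a$ is power-associative.
\item Reducing to integers fails. A multiplicative map $\varepsilon\colon A\circledast B\to\mathbb{Z}$ gives $\varepsilon(x)^4$ on both sides. The coefficient of $1$ in the difference is zero once $A$ and $B$ are power-associative.
\item Quotienting by the span of the products $\tilde a*\tilde a'$ kills $a*a=m1+u$, and with it exactly the information carried by $u$.
\end{itemize}

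The unknown structure constants are disposed of differently in the paper. Write $u=a*a-m1$ and $a^3=m'1+u'$, and similarly $v$, $n'$, $v'$ for $b$. This gives
\[
u*a=a^3-ma,\qquad u'*a=a^3*a-m'a,\qquad u*u=a^2*a^2-m^2\,1-2mu .
\]
Then one argues by contradiction. If $A\circledast B$ were power-associative, so would be its subrings $A$ and $B$, whence $a^2*a^2=a^3*a$ and $b^2*b^2=b^3*b$. For $x=sa+tb$ all unknown terms then cancel, and
\[
x^2*x^2-(x^2*x)*x=s^2t^2(nu+mv)-st^3n'a-s^3tm'b .
\]

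Moreover, the single test element $x=a+b$ can be genuinely degenerate. The coefficient of $a$ in $nu+mv-n'a-m'b$ is $nm_a-n'$, where $n'$ enters with a \emph{negative} sign. So $x=a+b$ works when some $\tilde a\neq a$ has $m_{\tilde a}\neq0$, but not in general when $u\in\mathbb{Z}a$.

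For instance, take $A=B=\mathbb{Z}[X]/(X^2-X-1)$ with $a^2=1+a$ and $b^2=1+b$. Then $x=a+b$ satisfies $x*x=2\cdot1+x$. It therefore generates the associative subring $\mathbb{Z}1\oplus\mathbb{Z}x$ and detects nothing.

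The missing idea is to use both $a+b$ and $a-b$ (the paper's $(s,t)=(1,\pm1)$). The two differences are $nu+mv\mp(n'a+m'b)$. If both vanish, then $2(n'a+m'b)=0$, hence $n'a+m'b=0=nu+mv$ by torsion-freeness. Linear independence of $\widetilde{A}\cup\widetilde{B}$ then forces $nm_{\tilde a}=0$ for every $\tilde a$, a contradiction. In the example above, $y=a-b$ gives $y^2*y^2-(y^2*y)*y=2a+2b\neq0$.
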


\begin{proof}
We set $a^2:=a*a=aa$, $u:=\sum\limits_{\tilde{a}\in\widetilde{A}}m_{\tilde{a}}
\tilde{a}$, $b^2:=b*b=bb$, and $v:=\sum\limits_{\tilde{b}\in\widetilde{B}}
n_{\tilde{b}}\tilde{b}$. Then we have that $a^2=m1+u$ and $b^2=n1+v$.

Next, suppose that $a^3:=a^2*a=a*a^2=m'1_A+\sum\limits_{\tilde{a}\in
\widetilde{A}}m_{\tilde{a}}'\tilde{a}$ and $b^3:=b^2*b=b*b^2=n'1_B+
\sum\limits_{\tilde{b}\in\widetilde{B}}n_{\tilde{b}}'\tilde{b}$.\footnote{Note
that $(A,*)$ is a unital subring of the unital commutative product $A\circledast
B$. Hence, $a^2\in A$ implies that $a^3\in A$. Similarly, since $(B,*)$ is a
unital subring of the unital commutative product $A\circledast B$ and
$b^2\in B$, we have that $b^3\in B$.} By setting $u':=\sum\limits_{\tilde{a}
\in\widetilde{A}}m_{\tilde{a}}'\tilde{a}$ and $v':=\sum\limits_{\tilde{b}\in
\widetilde{B}}n_{\tilde{b}}'\tilde{b}$, we obtain that $a^3=m'1+u'$ and
$b^3=n'1+v'$.

In order to show that $A\circledast B$ is not power-associative, we consider
the element $x:=sa+tb\in A\circledast B$ for arbitrary integers $s$ and $t$,
set $x^2:=x*x$, and compute $(x^2*x)*x$ as well as $x^2*x^2$.

Now observe that
$$u*a=(a^2-m1)*a=a^3-ma=m'1+u'-ma$$
and
$$v*b=(b^2-n1)*b=b^3-nb=n'1+v'-nb\,.$$
From these identities and
\begin{eqnarray*}
x^2 & = & (sa+tb)^2=s^2a^2+st(a*b+b*a)+t^2b^2=s^2a^2+t^2b^2\\
& = & s^2(m1+u)+t^2(n1+v)=(s^2m+t^2n)1+s^2u+t^2v\,,
\end{eqnarray*}
we obtain that
\begin{eqnarray*}
x^2*x & = & [(s^2m+t^2n)1+s^2u+t^2v]*(sa+tb)\\
& = & (s^2m+t^2n)sa+(s^2m+t^2n)tb+s^3(u*a)+t^3(v*b)\\
& = & (s^2m+t^2n)sa+(s^2m+t^2n)tb+s^3(m'1+u'-ma)+t^3(n'1+v'-nb)\\
& = & (s^3m'+t^3n')1+st^2na+s^3u'+s^2tmb+t^3v'\,.
\end{eqnarray*}

By employing the identities
$$u'*a=(a^3-m'1)*a=(a^3*a)-m'a$$
and
$$v'*b=(b^3-n'1)*b=(b^3*b)-n'b\,.$$
we deduce that
\begin{eqnarray*}
(x^2*x)*x & = & [(s^3m'+t^3n')1+st^2na+s^3u'+s^2tmb+t^3v']*(sa+tb)\\
& = & (s^3m'+t^3n')sa+s^2t^2na^2+s^4u'*a\\
&& +(s^3m'+t^3n')tb+s^2t^2mb^2+t^4v'*b\\
& = & (s^3m'+t^3n')sa+s^2t^2na^2+s^4(a^3*a)-s^4m'a\\
&& +(s^3m'+t^3n')tb+s^2t^2mb^2+t^4(b^3*b)-t^4n'b\\
& = & s^4(a^3*a)+t^4(b^3*b)+s^2t^2na^2+s^2t^2mb^2+st^3n'a+s^3tm'b\\
& = & s^4(a^3*a)+t^4(b^3*b)+s^2t^2n(m1+u)+s^2t^2m(n1+v)+st^3n'a+s^3tm'b\\
& = & s^4(a^3*a)+t^4(b^3*b)+2s^2t^2mn1+s^2t^2(nu+mv)+st^3n'a+s^3tm'b\,.
\end{eqnarray*}

On the other hand, we have that
$$a^2*a^2=(m1+u)*(m1+u)=m^21+2mu+u*u\,,$$
or equivalently,
$$u^2:=u*u=(a^2*a^2)-m^21-2mu\,.$$
Similarly, we obtain that
$$b^2*b^2=(n1+v)*(n1+v)=n^21+2nv+v*v\,,$$
or equivalently,
$$v^2:=v*v=(b^2*b^2)-n^21-2nv\,.$$
From these identities for $u^2$ and $v^2$ in conjunction with $u*v=0=v*u$, it
follows that
\begin{eqnarray*}
x^2*x^2 & = & [(s^2m+t^2n)1+s^2u+t^2v]*[(s^2m+t^2n)1+s^2u+t^2v]\\
& = & (s^2m+t^2n)^21+2(s^2m+t^2n)s^2u+s^4u^2+2(s^2m+t^2n)t^2v+t^4v^2\\
& = & (s^2m+t^2n)^21+2(s^2m+t^2n)s^2u+s^4[(a^2*a^2)-m^21-2mu]\\
&& +2(s^2m+t^2n)t^2v+t^4[(b^2*b^2)-n^21-2nv]\\
& = & s^4(a^2*a^2)+t^4(b^2*b^2)+2s^2t^2mn1+2s^2t^2(nu+mv)\,.
\end{eqnarray*}

Suppose now that $A\circledast B$ is power-associative. Then, in particular,
the subrings $A$ and $B$ are also power-associative, i.e., the identity $a^2*
a^2=(a^2*a)*a=a^3*a$ holds in $A$ and the identity $b^2*b^2=(b^2*b)*b
=b^3*b$ holds in $B$. Consequently, we obtain from the identities for $x^2
*x^2$ and $(x^2*x)*x$ that
$$0=x^2*x^2-(x^2*x)*x=s^2t^2(nu+mv)-st^3n'a-s^3tm'b$$
for any integers $s$ and $t$. In particular, this holds for $(s,t)=(1,1)$
and $(s,t)=(1,-1)$, and therefore we have that $nu+mv=\pm(n'a+m'b)$,
or $2(n'a+m'b)=0$. Hence, we obtain that $nu+mv=n'a+m'b=0$, or equivalently,
$$\sum_{\tilde{a}\in\widetilde{A}}(nm_{\tilde{a}})\tilde{a}+\sum_{\tilde{b}
\in\widetilde{B}}(mn_{\tilde{b}})\tilde{b}=0\,.$$
Since $\tilde{A}\cup\tilde{B}$ is linearly independent in $A\circledast B$,
all these coefficients must vanish. In particular, we conclude that
$nm_{\tilde{a}}=0$ for every element $\tilde{a}\in\widetilde{A}$ which
contradicts the hypothesis that $n\ne 0$ and at least one of the integers
$m_{\tilde{a}}$ is non-zero.
\end{proof}

\begin{rem}\label{gen}
\emph{Lemma \ref{criterion} remains true when instead of $a\in\tilde{A}$
we just assume that $a$ is an integer linear combination of elements of
$\tilde{A}$ and instead of $b\in\tilde{B}$ we just assume that $b$ is an
integer linear combination of elements of $\tilde{B}$.}
\end{rem}

Note that Theorem \ref{nonpowasso} corrects Theorem 5.13 and
Corollary 5.17 in \cite{FW}.\footnote{The mistake in the proof of
Theorem 5.13 was not to take into account that in the computation
of $u^2\cdot v$ or $u\cdot(u\cdot v)$, respectively, some of the terms
$[F_{\lambda'+\lambda''}^s]$ etc.\ in $u^2$ or $u\cdot v$, respectively,
are equal to $[F_0^s]$ etc., and therefore do not cancel when multiplied
by $v$ or $u$, respectively.}  In addition,
at the same time it generalizes Proposition 5.10 as well as Theorem 5.19,
Proposition 5.21, and also Example 5.14 in \cite{FW}.

\begin{thm}\label{nonpowasso}
Let $\lf$ be a non-zero Leibniz algebra over an arbitrary field $\fbb$
whose canonical Lie algebra $\lf_\lie$ is finite dimensional. Then the
Grothendieck ring $\K^\bi(\lf)$ is not power-associative. Moreover,
$\K^\bi(\lf)$ is neither alternative nor a Jordan ring.
\end{thm}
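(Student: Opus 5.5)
The plan is to reduce Theorem~\ref{nonpowasso} to Lemma~\ref{criterion}, using the structure of $\K^\bi(\lf)$ from Section~5 of \cite{FW}. There, $\K^\bi(\lf)$ is identified with a unital commutative product $A\circledast B$. Here $A$ is spanned by the classes of the simple symmetric Leibniz bimodules, $B$ by the classes of the simple anti-symmetric ones, and both are copies of the Grothendieck ring of finite-dimensional $\lf_\lie$-modules. The identity $1_A$ is the class of the trivial one-dimensional symmetric bimodule $[F^s]$, and the cross products satisfy $u*v=0=v*u$, as used in the proof of Lemma~\ref{criterion}. Products of classes within $A$ (respectively $B$) are computed from the composition factors of tensor products of the underlying $\lf_\lie$-modules. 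So it suffices to exhibit suitable elements $a$ and $b$, which by Remark~\ref{gen} may be integer combinations of simple classes.

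\emph{First step: a non-trivial simple $\lf_\lie$-module.} I would note that $\lf_\lie\neq 0$. Indeed, if $\lf=\mathrm{Leib}(\lf)$, then since the Leibniz kernel lies in the left annihilator, $[\lf,\lf]=[\mathrm{Leib}(\lf),\lf]=0$. Then $\lf$ is abelian, hence Lie, hence $\mathrm{Leib}(\lf)=0$, contradicting $\lf\neq 0$. Next I produce a simple $\lf_\lie$-module $S$ on which $\lf_\lie$ acts non-trivially. If $\lf_\lie\neq[\lf_\lie,\lf_\lie]$, I take a one-dimensional module $F_\lambda$ for a non-zero linear form $\lambda$ vanishing on $[\lf_\lie,\lf_\lie]$. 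If $\lf_\lie$ is perfect, the adjoint module $\lf_\lie$ cannot have only trivial composition factors, since otherwise $\lf_\lie$ would act nilpotently on itself and hence be nilpotent, contradicting perfectness. So $S$ can be taken to be a non-trivial composition factor of the adjoint module.

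\emph{Second step: choosing $a$ and $b$.} For $b$ I take $b=[S^a]+[(S^*)^a]$ in $B$, where $S^*$ is the dual module. Then
\[
bb=[S^a]^2+2[S^a]*[(S^*)^a]+[(S^*)^a]^2 .
\]
Since the evaluation map $S\otimes S^*\to F$ is surjective, $F$ occurs as a composition factor of $S\otimes S^*$. All multiplicities are non-negative, so the coefficient $n$ of $1_B$ in $bb$ is at least $2$, hence non-zero. For $a$ I take $a=1_A+[S^s]$, so that $aa=1_A+2[S^s]+[S^s]*[S^s]$. The coefficient of the non-identity basis element $[S^s]$ is then $2$ plus the (non-negative) multiplicity of $S$ in $S\otimes S$, hence non-zero. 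The step I expect to be most delicate is checking that this choice matches the hypotheses of Lemma~\ref{criterion} and Remark~\ref{gen}. Specifically, I need that $1_A$ itself is allowed in the integer combination, i.e., that the lemma's computation only uses $a\in A$ with the stated shape of $aa$, which it does. If the convention excludes $1_A$, I would instead use $a=[S^s]+[(S^*)^s]$ and argue that some non-trivial factor survives. The hardest case there is when all factors of $S\otimes S$ are trivial, which can happen in characteristic~$2$ for one-dimensional $S$. Lemma~\ref{criterion} then yields that $\K^\bi(\lf)$ is not power-associative.

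\emph{Third step: alternative and Jordan.} Every alternative ring is power-associative by Artin's theorem. For a Jordan ring I would use that $\K^\bi(\lf)$ is a free abelian group, so it embeds into $\K^\bi(\lf)\otimes_{\mathbb{Z}}\mathbb{Q}$. The latter would be a commutative Jordan algebra over a field of characteristic $0$, and hence power-associative, which would force $\K^\bi(\lf)$ to be power-associative as well. This contradicts the first part, so $\K^\bi(\lf)$ is neither alternative nor a Jordan ring.
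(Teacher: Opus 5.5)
Your second step has a genuine gap: Lemma~\ref{criterion} does not apply to $a=1_A+[S^s]$. In $A\circledast B$ one has $1_A=1_B=[F_0^{s/a}]$, the identity of the whole ring, so $a*b=b\neq 0$. The proof of the lemma, however, needs $a*b=0=b*a$ to get $x^2=s^2a^2+t^2b^2$, and likewise $v*a=0$ when expanding $x^2*x$. Remark~\ref{gen} only allows integer combinations of elements of $\widetilde{A}$, which excludes $1_A$.

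The choice cannot be rescued. The elements $x=sa+tb$ and $x-s1=s[S^s]+tb$ generate the same unital subring, so adding $1_A$ gives no more information than $a=[S^s]$, and the extra coefficient $2$ is illusory. Your elements actually fail whenever $\fbb$ has characteristic $2$ and $S=F_\lambda$. Then $S^*\cong S$, and $e:=[F_\lambda^s]$, $f:=[F_\lambda^a]$ satisfy $e^2=f^2=1$ and $e*f=0$. So every $y=c1+se+tf$ satisfies $(y-c1)^2\in\mathbb{Z}1$, and $\mathbb{Z}1+\mathbb{Z}e+\mathbb{Z}f$ is power-associative. The case you flagged as hardest is therefore left open, and it is exactly where new input is needed.

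Here is what the paper does in characteristic $2$ (it splits $\gf$ into solvable versus non-solvable rather than perfect versus non-perfect):
\begin{itemize}
\item If $|\fbb|>2$, it takes $a=[F_\lambda^s]+[F_{c\lambda}^s]$ with $c\in\fbb\setminus\{0,1\}$ and $b=[F_\lambda^a]$. Then $a^2=2[F_0^{s/a}]+2[F_{(1+c)\lambda}^s]$ has a non-trivial term.
\item If $\fbb$ has two elements, one-dimensional modules cannot suffice in general: for $\dim\gf/[\gf,\gf]=1$ they all lie in the power-associative subring above. The paper instead uses the two-dimensional irreducible module $M$ on which $h_1$ acts with characteristic polynomial $X^2+X+1$. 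A composition series gives $[M\otimes M]=2[F_0]+2[F_\lambda]$, and $a=[M^s]$, $b=[M^a]$ work.
\end{itemize}

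In your perfect case, once $1_A$ is dropped you still owe a non-trivial composition factor of $S^*\otimes S$, which you did not supply. It exists for the following reason. The image $\rho(\gf)$ is a $\gf$-submodule of $\End_\fbb(S)\cong S^*\otimes S$, and it is perfect and non-zero, hence not nilpotent. By Engel's theorem, applied to the adjoint action of $\rho(\gf)$ on itself, it cannot have only trivial composition factors. With this, $a=[S^s]+[(S^*)^s]$ and $b=[S^a]+[(S^*)^a]$ work, as in the paper's non-solvable case.

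In characteristic $\neq 2$ with $S=F_\lambda$, dropping $1_A$ leaves $a=[F_\lambda^s]$ with $a^2=[F_{2\lambda}^s]$, which is the paper's first case. Your first and third steps are fine. The passage to $\K^\bi(\lf)\otimes_{\mathbb{Z}}\mathbb{Q}$ is a valid substitute for \cite[Lemma 5.15]{FW}.
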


\begin{proof}
We begin by assuming that the canonical Lie algebra $\gf:=\lf_\lie$
is solvable. Since by hypothesis $\lf$ is non-zero, it follows from
\cite[Proposition 2.20]{F1} that $\gf$ is also non-zero, and therefore
$\gf$ is not perfect. Now choose a basis $\{e_1,\dots,e_m\}$ of the
derived subalgebra $[\gf,\gf]$ of $\gf$ and complete this basis to
a basis $\{e_1,\dots,e_m,h_1,\dots,h_n\}$ of $\gf$. As $\gf$ is not
perfect, we have that $n\ge 1$, and thus one can define a linear
form $\lambda$ on $\gf$ by setting $\lambda(h_1):=1$ and letting
it be zero on all other basis elements. Then $F_\lambda:=\fbb
1_\lambda$ with $\gf$-action defined by $x\cdot 1_\lambda:=
\lambda(x)1_\lambda$ is a non-trivial irreducible $\gf$-module.

In the remainder of this part of the proof we now distinguish
three cases:

Firstly, assume that the characteristic of the ground field is not
two, and set $a:=[F_\lambda^s]$ and $b:=[F_\lambda^a]+
[F_{-\lambda}^a]$. Since $\pm\lambda \ne 0$, and in this case
$\pm 2\lambda\ne 0$, we obtain that
$$a^2=[F_\lambda^s]\cdot[F_\lambda^s]=[F_\lambda^s\ovo
F_\lambda^s]=[F_\lambda^s\otimes F_\lambda^s]=[(F_\lambda
\otimes F_\lambda)^s]=[F_{2\lambda}^s]$$
and
$$b^2=[F_\lambda^a]^2+[F_{-\lambda}^a]^2+2[F_0^{s/a}]=
2[F_0^{s/a}]+[F_{2\lambda}^a]+[F_{-2\lambda}^a]$$
satisfy the hypothesis of Lemma \ref{criterion} (see Remark \ref{gen}).
Hence, the Grothendieck ring $\K^\bi(\lf)$ is not power-associative.

Secondly, assume that the ground field $\fbb$ has characteristic two
and contains more than two elements. Then one can choose an element
$c\in\fbb\setminus\{0,1\}$. Furthermore, set $a:=[F_\lambda^s] +
[F_{c\lambda}^s]$ and $b:=[F_\lambda^a]$. Since
by the choice of $c$ we have that $c\lambda\ne 0$ and $(1+c)\lambda
\ne 0$, we obtain that
$$a^2=[F_\lambda^s]^2+[F_{c\lambda}^s]^2+2[F_{(1+c)\lambda}^s]=
2[F_0^{s/a}]+2[F_{(1+c)\lambda}^s]$$
and
$$b^2=[F_\lambda^a]^2=[F_0^{s/a}]$$
satisfy the hypothesis of Lemma \ref{criterion} (see Remark \ref{gen}).
Consequently, the Grothendieck ring $\K^\bi(\lf)$ is not power-associative.

Thirdly, assume that the ground field $\fbb$ contains only two elements. In this
case consider the two-dimensional $\gf$-module $M=\fbb m_1\oplus
\fbb m_2$ on which $h_1$ acts via $h_1\cdot m_1:=m_2$, $h_1\cdot
m_2:=m_1+m_2$, and all the other basis vectors of $\gf$ act trivially.
Note that a two-dimensional module is irreducible if, and only if, it
does not contain a one-dimensional submodule, and therefore $M$
is irreducible exactly when the linear operator $h_1$ has no eigenvalue
on $M$. But the latter is the case because its characteristic polynomial
$X^2+X+1$ has no roots in $\fbb$, and thus $M$ is a non-trivial
irreducible $\gf$-module. Moreover, it is straightforward to verify that
$$M\otimes M=\fbb(m_1\otimes m_1)\oplus\fbb(m_1\otimes m_2)
\oplus\fbb(m_2\otimes m_1)\oplus\fbb(m_2\otimes m_2)$$
has the following action of $h_1$:
\begin{eqnarray*}
h_1\cdot(m_1\otimes m_1) & = & m_1\otimes m_2+m_2\otimes m_1\,,\\
h_1\cdot(m_1\otimes m_2) & = & m_1\otimes m_1+m_1\otimes m_2
+m_2\otimes m_2\,,\\
h_1\cdot(m_2\otimes m_1) & = & m_1\otimes m_1+m_2\otimes m_1
+m_2\otimes m_2\,,\\
h_1\cdot(m_2\otimes m_2) & = & m_1\otimes m_2+m_2\otimes m_1\,.
\end{eqnarray*}
In particular, for dimension reasons, we obtain from
$$h_1\cdot(m_1\otimes m_2+m_2\otimes m_1)=m_1\otimes m_2+m_2\otimes m_1$$
that
$$U:=\fbb(m_1\otimes m_1)\oplus\fbb(m_1\otimes m_2+m_2\otimes m_1)
\oplus\fbb(m_2\otimes m_2)$$
is a maximal $\gf$-submodule of $M\otimes M$,
$$V:=\fbb(m_1\otimes m_2+m_2\otimes m_1)\oplus\fbb(m_2\otimes m_2)$$
is a maximal $\gf$-submodule of $U$, and
$$W:=\fbb(m_1\otimes m_2+m_2\otimes m_1)=\gf U=\gf V$$
is a maximal $\gf$-submodule of $V$. Consequently, we have that
$$M\otimes M\supset U\supset V\supset W\supset 0$$
is a composition series of $M\otimes M$ with composition factors $(M
\otimes M)/U\cong F_\lambda$, $U/V\cong F_0$, $V/W\cong F_0$, and $W
\cong F_\lambda$, where $F_0$ denotes the one-dimensional trivial $\gf$-module
and $F_\lambda$ is the $\gf$-module defined earlier. Hence, we obtain that
$$[M\otimes M]=2[F_0]+2[F_\lambda]\,.$$
Now, by setting $a:=[M^s]$ and $b:=[M^a]$, we conclude from Lemma
\ref{criterion} that the Grothendieck ring $\K^\bi(\lf)$ is not power-associative.

Next, we assume that the canonical Lie algebra $\gf$ is not solvable. Now
consider a composition series
$$\gf_\ad=M_\ell\supset M_{\ell-1}\supset\cdots\supset M_1\supset M_0=0$$
of the adjoint $\gf$-module $\gf_\ad$, and let
$$\nf:=\bigcap_{k=1}^\ell\Ker(\rho_k)$$
be the intersection of the annihilators of the composition factors of
$\gf_\ad$, where $\rho_k$ denotes the representation of $\gf$ on the
composition factor $M_k/M_{k-1}$ $(1\le k\le\ell)$. Then it is well known
that $\nf$ is a nilpotent ideal of $\gf$ (for example, see \cite[Proposition~1]{F0}).
Furthermore, the Lie algebra $\gf/\nf$ embeds into the finite direct
product of the images $\rho_k(\gf)$ $(1\le k\le\ell)$. Now suppose that
all these images are nilpotent Lie algebras. Then their finite direct product
would be nilpotent, and thus $\gf/\nf$ would be solvable. But as $\nf$
is solvable, $\gf$ would also be solvable which contradicts our assumption.

Hence, there exists $k\in\{1,\dots,\ell\}$ for which $\rho_k(\gf)$ is
not a nilpotent Lie algebra. In particular, $N:=M_k/M_{k-1}$ is a
non-trivial irreducible $\gf$-module, and then the linear dual $N^*$
of $N$ is also a non-trivial irreducible $\gf$-module. Note that $\rho_k$
is not only a homomorphism of Lie algebras, but also a homomorphism
of $\gf$-modules. In particular, $\rho_k(\gf)$ is a $\gf$-submodule of
$\End_\fbb(N)$. Since $\End_\fbb(N)$ and $N^*\otimes N$ are isomorphic
as $\gf$-modules, every composition factor of $\rho_k(\gf)$ is also a
composition factor of $N^*\otimes N$.

Suppose now that every composition factor of $\rho_k(\gf)$ is trivial.
Then every element of $\gf$ would act nilpotently on $\rho_k(\gf)$.
But since the $\gf$-action on $\rho_k(\gf)$ coincides with the adjoint
action of $\rho_k(\gf)$ on itself, Engel's theorem (see \cite[Theorem
3.2]{H}) would imply that $\rho_k(\gf)$ is nilpotent which is a contradiction.
Hence, $\rho_k(\gf)$ has a non-trivial composition factor, and therefore
the same holds for $N^*\otimes N$.

Finally, the evaluation map from $N^*\otimes N$ into the ground field
is non-zero, and thus it is a surjective homomorphism of $\gf$-modules
from $N^*\otimes N$ onto the one-dimensional trivial $\gf$-module $F_0$.
This shows that the trivial irreducible $\gf$-module is a composition factor
of $N^*\otimes N$.

If we set $a:=[(N^*)^s]+[N^s]$ and $b:=[(N^*)^a]+[N^a]$, then
$$a^2=[(N^*)^s]^2+[N^s]^2+2[(N^*\otimes N)^s]$$
and
$$b^2=[(N^*)^a]^2+[N^a]^2+2[(N^*\otimes N)^a]$$
satisfy the hypothesis of Lemma \ref{criterion} (see Remark \ref{gen}),
and therefore the Grothendieck ring $\K^\bi(\lf)$ is not power-associative.

By virtue of \cite[Lemma 5.15]{FW}, in all the above cases, $\K^\bi (\lf)$ is
also neither alternative nor a Jordan ring which then completes the proof.
\end{proof}

\begin{rem}
\emph{The hypothesis in Theorem \ref{nonpowasso} that the canonical
Lie algebra $\lf_\lie$ of a Leibniz algebra $\lf$ is finite dimensional does
not imply that $\lf$ itself is finite dimensional.}

\emph{Namely, let $Z(\lambda)$ denote the (infinite-dimensional) Verma
module of highest weight $\lambda$ over the three-dimensional simple Lie
algebra $\slf_2(\cbb)$ of traceless complex $2\times 2$ matrices and
assume that $\lambda+1$ is not a non-negative integer. Then it follows
from Exercise 7\,(c) in Section 7 of \cite{H} and Theorem 2.3 in \cite{F2} that the
hemi-semidirect product $\lf:=\slf_2(\cbb)\ltimes_\hemi Z(\lambda)$ is an
infinite-dimensional simple (and thus non-solvable) Leibniz algebra whose canonical Lie algebra
is three-dimensional (see also Example 5.12  in \cite{FW}). Moreover, the free Leibniz algebra on one generator
is an infinite-dimensional solvable Leibniz algebra whose canonical Lie algebra
is one-dimensional (see \cite[Example 1.2\,(d)]{LP} for the definition and
\cite[Lemma 1.3]{LP} for the construction of a free Leibniz algebra).}
\end{rem}

In summary, Theorem \ref{nonpowasso} shows that we now have
a complete picture of the associativity properties (namely, alternativity,
power-associativity, or being a Jordan ring) of the Grothendieck ring
of a non-zero Leibniz algebra whose canonical Lie algebra is finite
dimensional. Namely, in this case the Grothendieck ring is highly
non-associative. Of course, the Grothendieck ring of the zero algebra
is just the ring of integers (see the remark after Proposition 5.2 in
\cite{FW}). On the other hand, at this moment one can only speculate
what happens in general as our proof of Theorem \ref{nonpowasso}
decisively uses that the canonical Lie algebra is finite dimensional.

Finally, let us take the opportunity to point out to the reader of our paper
\cite{FW} to make the following three changes:
\begin{itemize}
\item The sentence ``The proof of (b) is very similar to the proof of (a) and is
          left to the reader.'' in the proof of Lemma 5.3\,(b) should be replaced
          by ``(b) follows from (a) by setting $a':=a$.''
\item In the proof of Theorem 5.19 of \cite{ FW} we implicitly used the bijection
          between the set of the isomorphism classes of the finite-dimensional
          irreducible $\gf$-modules and the set of the dominant integral weights
          with respect to a Cartan subalgebra of $\gf$ which only works when the
          ground field is algebraically closed. But indeed, all that is needed here,
          is Weyl's theorem on complete reducibility. So the first seven lines of
          the proof of Theorem 5.19 in \cite{FW} should be replaced by
          ``Furthermore, it follows from Weyl's theorem on complete reducibility
          (see [13, Theorem 6.3]) that
          $$M\otimes M=m_0L_0\oplus\bigoplus\limits_{i\in I}m_i L_i\,,$$
          where $L_0$ denotes the trivial irreducible $\gf$-module, $\{[L_i]\mid i
          \in I\}$ is the set of the isomorphism classes of finite-dimensional non-trivial
          irreducible $\gf$-modules, and $m_i$ denotes the multiplicity of $L_i$ in
          $M\otimes M$.''
\item In Remark 5.22, the somewhat sloppy statement regarding the existence
          of non-degenerate invariant bilinear forms on graded Lie algebras of
          Cartan type should be changed to ``\dots, for {\bf some of the} graded
          Lie algebras of Cartan type (see Section 6 in Chapter 4 of [22]),
          \dots`` Namely, except for the Hamiltonian algebras, the other three
          families admit only in a very few cases such bilinear forms, and therefore
          our original statement might have been confusing.
\end{itemize}
\medskip


\noindent {\bf Acknowledgment.} The first author would like to thank Layla
Sorkatti and Kenneth Price for inviting him to speak in the Special Session
on Non-Associative Rings and Algebras at the Spring Southeastern Sectional
AMS Meeting in March 2026. In particular, he is very grateful to the organizers
and to the AMS for allowing him to present the results of \cite{FW} at this
meeting in a virtual talk.




\begin{thebibliography}{99}



\bibitem{BMS}
M. Bremner, L. Murakami, and I. Shestakov:
Chapter 86: Nonassociative Algebras,
in: {\it Handbook of Linear Algebra\/} (Second enlarged edition) (ed. L. Hogben),
Discrete Mathematics and its Applications, Chapman \& Hall/CRC Press, Boca Raton, FL, 2014.

\bibitem{F0}
J. Feldvoss:
Chief factors and the principal block of a restricted Lie algebra,
in: The Monster and Lie Algebras, Columbus, OH, 1996
(eds.\ J. C. Ferrar and K. Harada),
{\it Ohio State Univ.\ Math.\ Res.\ Inst.\ Publ.\/}, vol.\ {\bf 7},
Walter de Gruyter, Berlin, 1998, pp.\ 187--194.

\bibitem{F1}
J. Feldvoss:
Leibniz algebras as non-associative algebras,
in: Nonassociative Mathematics and Its Applications, Denver, CO, 2017
(eds. P. Vojt\v{e}chovsk\'y, M. R. Bremner, J. S. Carter, A. B. Evans,
J. Huerta, M. K. Kinyon, G. E. Moorhouse, J. D. H. Smith),
{\it Contemp. Math.\/}, vol. {\bf 721},
American Mathematical Society, Providence, RI, 2019, pp. 115--149.

\bibitem{F2}
J. Feldvoss:
Semi-simple Leibniz algebras I,
Preprint (16 pages), available as arXiv:2401.05588 [math.RA], 2024.

\bibitem{FW}
J. Feldvoss and F. Wagemann:
Tensor products of Leibniz bimodules and Grothendieck rings,
{\it J. Non-Associative Structures\/} {\bf 1} (2026), no.\ 1, Paper no.\ 4 (45 pages).

\bibitem{H}
J. E. Humphreys:
{\it Introduction to Lie Algebras and Representation Theory\/} (Third printing, revised),
Graduate Texts in Mathematics, vol. {\bf 9},
Springer-Verlag, New York/Berlin, 1980.

\bibitem{LP}
J.-L. Loday and T. Pirashvili:
Universal enveloping algebras of Leibniz algebras and (co)homology,
{\it Math. Ann.\/} {\bf 296} (1993), no. 1, 139--158.

\bibitem{S}
R. D. Schafer:
{\it An Introduction to Nonassociative Algebras\/}
(Slightly corrected reprint of the 1966 original),
Dover Publications, Mineola, NY, 2017.


\end{thebibliography}
\end{document}